\newtheorem{thm}{Theorem}
\newtheorem{obs}{Observation}
\newtheorem{propos}{Proposition}
\newtheorem{problem}{Problem}
\newcommand{\degr}{\text{deg}}
\newcommand{\QR}{\text{QR}}
\newcommand{\C}{\mathbb{C}}
\newcommand{\T}{\mathbb{T}}
\newcommand{\ty}{\text{Ty}}
\title{Drawing outerplanar graphs}
\author{Noga Alon \thanks{Sackler School of Mathematics and
Blavatnik School of Computer Science,
Tel Aviv University,
Tel Aviv 69978, Israel. Email: nogaa@tau.ac.il.
Research supported in part by an ERC Advanced
grant,
by a USA-Israeli BSF grant, and by the Hermann Minkowski Minerva
Center for Geometry
in Tel Aviv University.}
\and Ohad N. Feldheim \thanks{Sackler School of Mathematics,
Tel Aviv University,
Tel Aviv 69978, Israel. Email: ohad\_f@netvision.net.il.
Research supported in part by an ERC Advanced
grant.  } }
\begin{document}
\maketitle

\begin{abstract}
It is shown that for any outerplanar graph G there is a one to one mapping of
the vertices of G to the plane, so that the number of distinct distances
between pairs of connected vertices is at most three. This settles a problem
of Carmi, Dujmovic, Morin and Wood. The proof combines (elementary)
geometric, combinatorial, algebraic and probabilistic arguments.
\end{abstract}

\section{Introduction}
A \emph{linear} embedding of a graph $G$ is a mapping of the vertices of $G$
to distinct points in the plane. The image of every edge $uv$ of the graph is
the open interval between the image of $u$ and the image of $v$. The length
of that interval is called the \emph{edge-length} of $uv$ in the embedding. A
\emph{degenerate drawing} of a graph $G$ is a linear embedding in which the
images of all vertices are distinct. A \emph{drawing} of $G$ is a degenerate
drawing in which the image of every edge is disjoint from the image of every
vertex. The \emph{distance-number} of a graph is the minimum number of
distinct edge-lengths in a drawing of $G$, the \emph{degenerate
distance-number} is its counterpart for degenerate drawings.

An \emph{outerplanar} graph is a graph that can be embedded in the plane
without crossings in such a way that all the vertices lie in the boundary of
the unbounded face of the embedding. In \cite{Pvpd}, Carmi, Dujmovic, Morin
and Wood ask if the degenerate distance-number of outerplanar graphs are
uniformly bounded. We answer this positively by showing that the degenerate
distance number of outerplanar graphs is at most
$3$. %, and that the distance number of such graphs is at most $9$. Both results
This result is derived by explicitly constructing a degenerate drawing for
every such graph.

\begin{thm}\label{thm: 3 degenerate}
For almost every triple $a, b, c\in (0,1)$, every outerplanar graph has a
degenerate drawing using only edge-lengths $a,b$ and $c$.
\end{thm}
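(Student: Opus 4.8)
The plan is to build the degenerate drawing explicitly, by induction along a triangulation of $G$, and then to argue that the resulting point configuration is injective for almost every $(a,b,c)$. First, some reductions. A degenerate drawing of a graph restricts to a degenerate drawing of every spanning subgraph, and a disconnected graph can be drawn component by component and the pieces then moved apart by generic translations (this preserves all edge-lengths and keeps the map injective). Since every connected outerplanar graph on $n$ vertices is a spanning subgraph of a triangulation of an $n$-gon — keep adding edges while preserving outerplanarity until none can be added, and a maximal outerplanar graph is exactly a triangulated polygon — it suffices to prove the statement when $G$ is a triangulation of a polygon. Let $T$ be its weak dual tree, whose nodes are the inner triangles, two being adjacent when they share an edge; fix a root of $T$. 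Every non-root triangle $t$ shares exactly one edge, its \emph{base} $e(t)$, with its parent, and we call the third vertex of $t$ its \emph{apex}.

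Now the construction. Process the triangles from the root down, each after its parent. Place the three vertices of the root triangle as a triangle with prescribed side-lengths, and for each subsequent triangle $t$ place its apex at one of the two intersection points of the circle of radius $r_1(t)$ about one endpoint of $e(t)$ and the circle of radius $r_2(t)$ about the other, where $r_1(t),r_2(t)\in\{a,b,c\}$ are the prescribed lengths of the two non-base sides of $t$. All prescribed lengths come from a fixed labeling $\lambda\colon E(G)\to\{a,b,c\}$, and we argue separately in each of the three regions $\{a=\max(a,b,c)\}$, $\{b=\max\}$, $\{c=\max\}$, which cover $(0,1)^3$; say $c$ is the largest. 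We choose $\lambda$ so that \emph{every inner triangle has at least two sides labeled $c$}. Then each inner triangle is of the form $(c,c,s)$ with $0<s\le c<2c$, hence always realizable with two intersection points, and whichever of its sides plays the role of the base, the height entering the circle--circle intersection is $\sqrt{c^2-s^2/4}$ or $\tfrac{s}{2c}\sqrt{4c^2-s^2}$, a \emph{positive} real for every admissible triple. Thus the construction runs over $\R$ for \emph{all} triples in the region, with no failure set. Such a $\lambda$ exists because the edges receiving a non-dominant label need only form a set $S$ meeting each inner triangle in at most one edge, and $S$ can be chosen large enough for the next step — for instance one may take $S$ to consist of the $G$-edges dual to a maximum matching of $T$ together with a suitable collection of boundary edges.

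Next, genericity. Once $G$, the root of $T$, the labeling $\lambda$, and a choice of intersection point at each step are fixed — all combinatorial data, independent of $(a,b,c)$ — every vertex of $G$ receives a position whose coordinates are explicit algebraic functions of $(a,b,c)$, lying in a tower of quadratic extensions of $\mathbb{Q}(a,b,c)$ generated by the square roots occurring above. A degenerate drawing requires only that the $n$ positions be pairwise distinct (edges are allowed to meet vertices), i.e.\ that a certain element of this tower be nonzero for each pair of vertices. A nonzero element of such a tower vanishes only on a subset of $(0,1)^3$ of measure zero; since $G$ has finitely many pairs of vertices, there are only countably many finite graphs, and only three regions (plus the measure-zero tie set $\{a=b\}\cup\{b=c\}\cup\{c=a\}$), the whole theorem reduces to one statement: \emph{for a good choice of $\lambda$ and of the intersection points, no two of the resulting position-functions coincide identically.}

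That last statement is the crux, and I expect it to be the main obstacle. The danger comes from a vertex $v$ of large degree: all neighbors of $v$ lie within distance $\max(a,b,c)$ of $v$, so if too many triangles incident to $v$ were equilateral the neighbors would wind around $v$ and two of them would collide — exactly what prevents a sufficiently large ``fan'' from having an all-edges-equal drawing. The fix is to spread the non-dominant edges out enough that, around every vertex, the successive angular increments between consecutive neighbors take at least two genuinely distinct values from among $\arccos\!\bigl(1-\tfrac{a^2}{2c^2}\bigr)$, $\arccos\!\bigl(1-\tfrac{b^2}{2c^2}\bigr)$ and $\arccos\!\tfrac{s}{2c}$, and then to use that, outside a measure-zero set of triples, no nonnegative integer combination (with bounded coefficients) of these numbers is a positive multiple of $2\pi$ — which kills every winding collision; a separate analysis, tracking which square roots genuinely appear in a given coordinate, rules out collisions between vertices that are far apart in $G$. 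Carrying this bookkeeping through — designing $\lambda$ and choosing the intersection points so that every conceivable coincidence is a non-identity — is where the geometric, combinatorial, algebraic and probabilistic ingredients must be combined, and is the technically demanding part.
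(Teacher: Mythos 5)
Your reductions at the start are sound (restricting to maximal outerplanar graphs, i.e.\ triangulations of a polygon, handling components and the countably many graphs by a union of null sets), and the region-splitting plus ``two dominant sides per triangle'' trick does guarantee that your inductive circle--circle construction never fails, so the positions are genuine algebraic functions of $(a,b,c)$. But the proof stops exactly where the theorem actually lives. The entire difficulty is the statement you isolate and then defer: that one can choose the labeling $\lambda$ and the intersection-point choices so that no two vertex position-functions coincide \emph{identically} in $(a,b,c)$. You acknowledge this is ``the crux'' and ``the technically demanding part,'' and the sketch you give for it does not close the gap. The winding heuristic (make the angular increments around each vertex take two distinct values, then invoke that bounded integer combinations of the relevant arccosines are not multiples of $2\pi$ off a null set) at best rules out identical coincidences between two neighbors of a common vertex; coincidences between vertices that are far apart in the weak dual tree are not controlled by any single-vertex angle count, and the remark that one can ``track which square roots genuinely appear'' is not an argument -- different towers of quadratic extensions can still produce identically equal coordinates, and nothing in your construction forces the relevant differences to be nonzero elements of the tower. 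Moreover, you have not shown that a labeling satisfying simultaneously the realizability constraint (two dominant sides per triangle) and whatever spreading condition your non-coincidence argument would need actually exists for every triangulated polygon; the ``maximum matching of $T$ plus suitable boundary edges'' suggestion is left unverified. So what you have is a plausible program, not a proof.

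For comparison, the paper resolves precisely this point by a different mechanism: all outerplanar graphs are embedded at once into a universal triangulated graph $T^*$, which is covered by copies of a rhombus $H$; each rhombus is drawn with unit sides and one of only two angles $x_0,x_1\in\T$, so every vertex image becomes a polynomial $\sum_i c_i P_i(x_0,x_1)$ whose ``directions'' $P_i$ are monomials $x_0^jx_1^k$ and whose coefficients $c_i=q_i+\rho_{i+1}$ come from an explicit encoding of the position of the vertex in $T^*$. Non-coincidence is then proved combinatorially by comparing these coefficient sequences (Proposition~\ref{prop: psi is polynomial embedding}), and the almost-everywhere statement follows from the fact that distinct polynomials agree only on a null set of $\T^2$. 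If you want to salvage your approach, you would need an analogous explicit, graph-independent bookkeeping device that turns ``the two position-functions are not identically equal'' into a checkable combinatorial statement; without it, the theorem is not proved.
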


For matters of convenience, throughout the paper we consider all linear
embeddings as mapping vertices to the complex plane.

\section{Background and Motivation}

While the distance-number and the degenerate distance-number of a graph are
two natural notions in the context of representing a graph as a diagram in
the plane, this was not the sole motivation to their introduction.

Both notions were introduced by Carmi, Dujmovic, Morin and Wood in
\cite{Pvpd}, and generalize several well studied problems. Indeed,
Erd\H{o}s suggested in \cite{Erd} the problem of determining or
estimating  the
minimum possible number
of distinct distances between $n$ points in the plane. This problem can be
rephrased as finding the degenerate distance-number of $K_n$, the complete
graph on $n$ vertices. Recently, Guth and Katz, in a
ground-breaking paper \cite{GK}, established a
lower-bound of $cn / \log n$ on this number, which almost matches the
$O(n / \sqrt{\log n}) $
upper-bound due to Erd\H{o}s. Another problem,
considered by Szemer\'edi (See Theorem 13.7 in \cite{Smz}), is that of
finding the minimum possible number of distances between $n$ non-collinear
points in the plane. This problem can be rephrased as finding the
distance-number of $K_n$. One interesting consequence of the known results on
these questions is that the distance-number and
the degenerate distance-number of
$K_n$ are not the same, thus justifying the two separate notions. For a short
survey of the history of both problems, including some classical bounds, the
reader is referred to the background section of \cite{Pvpd}.

Another notion which is generalized by the degenerate distance-number is that
of a unit-distance graph, that is, a graph that can be embedded in the plane
so that two vertices are at distance one if and only if they are connected by
an edge. Observe that all unit-distance graphs have degenerate
distance-number $1$ while the converse is not true. Constructing "dense"
unit-distance graphs is a classical problem.  The best construction, due to
Erd\H{o}s \cite{Erd}, gives an $n$-vertex unit-distance graph with
$n^{1+c/\log \log n}$ edges, while the best known upper-bound, due to
Spencer, Szemer\'edi and Trotter \cite{SpSz}, is $cn^{4/3}$ (A simpler proof
for this bound was found by Sz\'ekely, see \cite{Szk}). Note that this
implies that the $k$ most frequent interpoint distances between $n$ points
occur in total no more than $ckn^{4/3}$ times, and thus that a graph with
degenerate distance-number $k$ may have no more than $ckn^{4/3}$ edges. Katz
and Tardos gave in \cite{KT} another bound on the frequency of interpoint
distances between $n$ points in the plane, which yields that a graph with
distance-number $k$ may have no more than $cn^{1.46}k^{0.63}$ edges.

After introducing the notions of distance-number and degenerate
distance-number, Carmi, Dujmovic, Morin and Wood
studied in \cite{Pvpd} the
behavior of bounded degree graphs with respect to these notions. They
show that graphs with bounded degree greater or equal to five can have
degenerate distance-number arbitrarily large, giving a polynomial lower-bound
for graphs with bounded degree greater or equal to seven. They also give a $c
\log(n)$ upper-bound to the distance-number of bounded degree graphs with
bounded treewidth. In the same paper, the authors ask whether this bound can
be improved for outerplanar graphs, and in particular whether
such graphs have a uniformly bounded % distance-number and
degenerate distance-number, a question which we answer here positively.

\section{Preliminaries}
%
%{\bf Marked graphs, and their gluing.} A marked graph is what we call a tuple
%$(G,e)$, where $G$ is an oriented graph and $e\in E_G$ . Let
%$(G_1,e_1),(G_2,e_2)$ be a pair of marked graphs. We define $G_1\glue{e}G_2$
%for $e\in G_1$ to be the marked graph $(G,e_1)$ where $G$ is the union of
%$G_1$ and $G_2$, identifying $e$ and $e_2$.

{\bf Outerplanarity, $\Delta$-trees and $T^*$.} An \emph{outerplanar} graph
is a graph that can be embedded in the plane without crossings so that all
its vertices lie in the boundary of the unbounded face of the embedding. The
edges which border this unbounded face are uniquely defined, and are called
the \emph{external} edges of the graph; the rest of the edges are called
\emph{internal}.

Let $\Delta$ be the triangle graph, that is, a graph on three vertices $v_0$,
$v_1$, and $v_2$, whose edges are $v_0v_1$, $v_0v_2$ and $v_2v_1$. A graph is
said to be a \emph{$\Delta$-tree} if it can be generated from $\Delta$ by
iterations of adding a new vertex and connecting it to both ends of some
external edge other than $v_0v_1$. This results in an outerplanar graph whose
bounded faces are all triangles. The adjacency graph of the bounded faces of
such a graph is a binary tree, that is -- a rooted tree of maximal degree 3.
In fact, all $\Delta$-trees are subgraphs of an infinite graph $T^*$. All
bounded faces of $T^*$ are triangles, and the adjacency graph of those faces
is a complete infinite binary tree. The root of $T^*$ is denoted by
$T^*_{\text{root}}$. An illustration of a $\Delta$-tree can be found in the
left hand side of figure~\ref{fig: T star}.

It is a known fact, which can be proved using induction, that the triangulation of every
outerplanar graph is a $\Delta$-tree. All outerplanar graphs are therefore
subgraphs of $T^*$, a fact which reduces Theorem~\ref{thm: 3 degenerate} to the
following:
\begin{propos}\label{prop: 3 degenerate}
For almost every triple $a, b, c\in (0,1)$, the graph $T^*$ has a degenerate
drawing using only edge-lengths $a$, $b$ and $c$.
\end{propos}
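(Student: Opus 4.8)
The plan is to draw $T^*$ by recursing along its face-adjacency tree, to record each vertex position as an explicit algebraic function of $(a,b,c)$, and then to delete a measure-zero set of triples on which two vertices happen to collide. After relabelling, assume $a\le b\le c$.

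\emph{A recursive family of drawings.} Root the face-adjacency tree of $T^*$ at the root face. Every triangular face $\phi$ other than the root then has a distinguished \emph{entry edge} $e(\phi)$, the edge it shares with its parent, while the root face is pinned to the plane by placing its edge $v_0v_1$, of length $c$ say, at a fixed position. To place a face $\phi$ it suffices to specify (a) which two elements of $\{a,b,c\}$ serve as the lengths of the two non-entry edges of $\phi$ -- any choice compatible with the triangle inequality relative to $|e(\phi)|$ being allowed -- and (b) on which side of the line through $e(\phi)$ the third vertex of $\phi$ lies; that third vertex is then one of the at most two intersection points of two circles, of radii in $\{a,b,c\}$, centred at the endpoints of $e(\phi)$. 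A short case check shows that, whatever the triple and whatever the admissible value $|e(\phi)|$, at least two legitimate placements exist -- for instance one may always give both non-entry edges length $c$, yielding the non-degenerate triangle with side lengths $|e(\phi)|,c,c$ whose third vertex can sit on either side of $e(\phi)$ -- so $T^*$ admits \emph{some} drawing with all edge-lengths in $\{a,b,c\}$ for every triple, and the only thing that can go wrong is that the vertex map is not injective. Fix once and for all a rule -- possibly a randomised one, see below -- assigning a placement to each face, and let $P_w=P_w(a,b,c)\in\C$ denote the resulting image of a vertex $w$ of $T^*$.

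\emph{From algebraicity to ``almost every''.} Because each $P_w$ is built from $a,b,c$ by finitely many field operations and square roots, the real and imaginary parts of $P_w-P_{w'}$ are branches of real-algebraic functions $F_{w,w'}$ on $(0,1)^3$. If $F_{w,w'}\not\equiv0$, then its zero set is Lebesgue-null; since $T^*$ has only countably many vertices, the union over all pairs $w\neq w'$ of these zero sets is still null, and off it the map $w\mapsto P_w$ is a degenerate drawing of $T^*$ with edge-lengths $a,b,c$. Thus Proposition~\ref{prop: 3 degenerate} reduces to the assertion that $F_{w,w'}\not\equiv0$ for every pair of distinct vertices, i.e.\ that the recursion never superimposes two distinct vertices for \emph{all} $(a,b,c)$ at once.

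\emph{The crux: a recursion that does not fold back on itself.} This is a genuine constraint and the heart of the matter. Running the recursion with every face equilateral of side $c$, for example, forces the faces of $T^*$ onto faces of the triangular lattice; since six such faces close up around each lattice vertex, a suitable face-path carries a descendant face back onto an ancestor, producing infinitely many identical vertex collisions, and more generally any ``periodic'' choice rule risks closing the drawing up on itself. The idea is to fix the type choices (a) so that, at one admissible specialisation of the parameters -- a degenerate regime such as $a\downarrow0$ with $b,c$ fixed, in which the faces become controlled thin slivers -- the partial drawings provably escape to infinity without overlap (for instance, the image of level $n$ of the face tree is confined to an annulus disjoint from the images of all earlier levels except along the legitimately shared entry edges), so that all vertices are pairwise distinct there and $F_{w,w'}$ cannot vanish identically. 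A complementary, more robust route -- presumably where the probabilistic ingredient enters -- is to make the choices (a)--(b) independently at random over the faces: the probability that two level-$n$ vertices coincide decays in their tree distance, because the intervening independent reflections disperse the apex, and a Lov\'asz local lemma or a Borel--Cantelli estimate then yields a positive probability that the random drawing is injective, which again rules out $F_{w,w'}\equiv0$. Establishing this non-degeneracy -- controlling the large-scale geometry of the recursion tightly enough to preclude accidental returns at every scale -- is the main obstacle; by comparison the circle-intersection geometry and the surrounding measure-theoretic bookkeeping are routine.
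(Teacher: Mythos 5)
Your measure-theoretic frame is sound and is essentially the paper's Proposition~\ref{prop: almost every polynomial pre-drawing is degenerate} in algebraic-function rather than polynomial form: once a choice rule is fixed on the open region $a<b<c$, the vertex positions are real-analytic there, and it suffices that no difference $P_w-P_{w'}$ vanishes identically. But that last assertion is the whole theorem, and you leave it unproved. The entire content of the paper is the construction of one concrete rule together with a proof of identical non-coincidence: the rhombus embedding $\psi$ of section~\ref{subs: psi}, the explicit formula \eqref{eq: psi formula} for every vertex via the QR-encoding, and Proposition~\ref{prop: psi is polynomial embedding} showing that distinct vertices receive distinct polynomials. You correctly identify this as ``the crux'' and ``the main obstacle,'' but you only gesture at two possible strategies without carrying either out, so what you have is a reduction of the problem to itself, not a proof.

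Neither sketch closes the gap as stated. The sliver regime $a\downarrow 0$ is a boundary point of the parameter domain, so concluding $F_{w,w'}\not\equiv 0$ from it requires both continuity of the chosen branches up to that boundary and an actual inductive proof of the claimed level-by-level separation; since level $n$ of the face tree contains exponentially many faces, ``confined to an annulus disjoint from earlier levels'' is a quantitative geometric statement that does not follow from thinness alone (and, as your lattice example shows, a careless rule really does fold back). The probabilistic route is also not routine: the event that two given vertices coincide identically depends on all choices along the tree path joining them, such events for different pairs are far from the bounded-dependence setting of the local lemma, and to extract a single deterministic rule you must beat a countable union of failure events, which needs uniform or summable bounds you have not indicated how to obtain. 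The paper sidesteps all of this by parametrizing with two rhombus angles $x_0,x_1\in\T$ instead of the lengths themselves: every pair of adjacent faces is drawn as a unit-side rhombus of angle $x_0$ or $x_1$ (chosen by the parity function $\ty$), so only the three lengths $1,|1-x_0|,|1-x_1|$ ever occur, each vertex image is a polynomial in $(x_0,x_1)$ whose coefficients $c_i$ are read off the QR-encoding, and distinctness of these polynomials is a finite combinatorial argument on encodings (Proposition~\ref{prop: psi is polynomial embedding}); scaling by $a$ then yields the three prescribed lengths for almost every triple. That explicit, checkable injectivity argument is exactly the ingredient missing from your proposal.
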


{\bf The rhombus graph $H$, Covering $T^*$ by rhombi.} In order to prove the
above proposition, we construct an explicit embedding of $T^*$ in $\C$. To do
so we introduce a covering of $T^*$ by copies of a particular directed graph
$H$ which we call a \emph{rhombus}. We then embed $T^*$ into $\C$, one copy
of $H$ at a time.

  \begin{figure}[htb!]
   \centering%
   \includegraphics[scale=3]{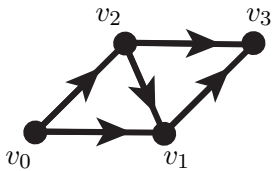}\\
      \rput(-1.7,0.4){$v_0$}
      \rput(0.4,0.4){$v_1$}
      \rput(-0.5,2.3){$v_2$}
      \rput(1.5,2.3){$v_3$}
   \caption{The rhombus graph $H$.}
   \label{fig: H}
   \end{figure}

The \emph{rhombus} directed graph $H$, is defined to be the graph satisfying
$V_H=\{v_0,v_1,v_2,v_3\}$ and $E_H=\{v_0v_1,v_0v_2,v_2v_3,v_1v_3,v_2v_1\}$.
We call $v_0$ the base vertex of $H$.
  \begin{figure}[htb!]
   \centering%
   \includegraphics[scale=3]{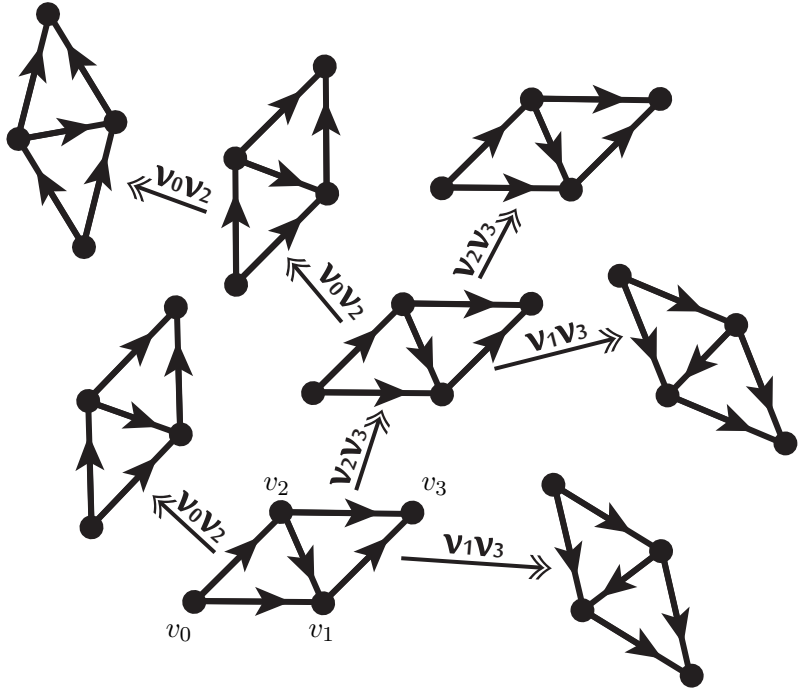}\\
      \rput(-3,1.3){$v_0$}
      \rput(-1.1,1.3){$v_1$}
      \rput(-1.7,3.25){$v_2$}
      \rput(0.4,3.25){$v_3$}
   \caption{A portion of $H^*$, including the names of the vertices of the node $H^*_{\text{root}}$ and the labels on the arcs.}
   \label{fig: H-Star}
   \end{figure}

We further define $H^*$ to be the infinite directed trinary tree whose nodes
are copies of $H$, labeling the three arcs emanating from every node by
$v_0v_2$, $v_2v_3$ and $v_1v_3$. We write $L(a)$ for the label of an arc $a$.
Let $N$ be a node of $H^*$, and let $v_iv_j\in E_H$; we call a pair $(N,v_i)$
a vertex of $H^*$, and a pair $(N,v_iv_j)$, an edge of $H^*$. Notice the
distinction between arcs of $H^*$ and edges of $H^*$, and the distinction
between nodes and vertices. The root of $H^*$ is denoted by
$H^*_{\text{root}}$. A portion of $H^*$ is depicted in figure~\ref{fig:
H-Star}.

There exists a natural map $\pi$ from the vertices of $H^*$ to the vertices
of $T^*$ which maps each node of $H^*$ to a pair of adjacent triangles of
$T^*$. $\pi$ is defined in such a way that $H^*_{\text{root}}$ is mapped to
${T^*_\text{root}}$ and to one of its neighboring triangles, and every
directed arc $MN$ of $H^*$, satisfies $\pi((M,L(MN)))=\pi((N,v_0v_1))$ (in
the sense of mapping origin to origin and destination to destination). In the
rest of the paper we extend $\pi$ naturally to edges and subgraphs, and
abridge $\pi((N,v))$ to $\pi(N,v)$. A portion of $T^*$ and its covering by
$H^*$ through $\pi$ are depicted in figure~\ref{fig: T star}.

  \begin{figure}[htb!]
   \centering%
   \includegraphics[scale=3]{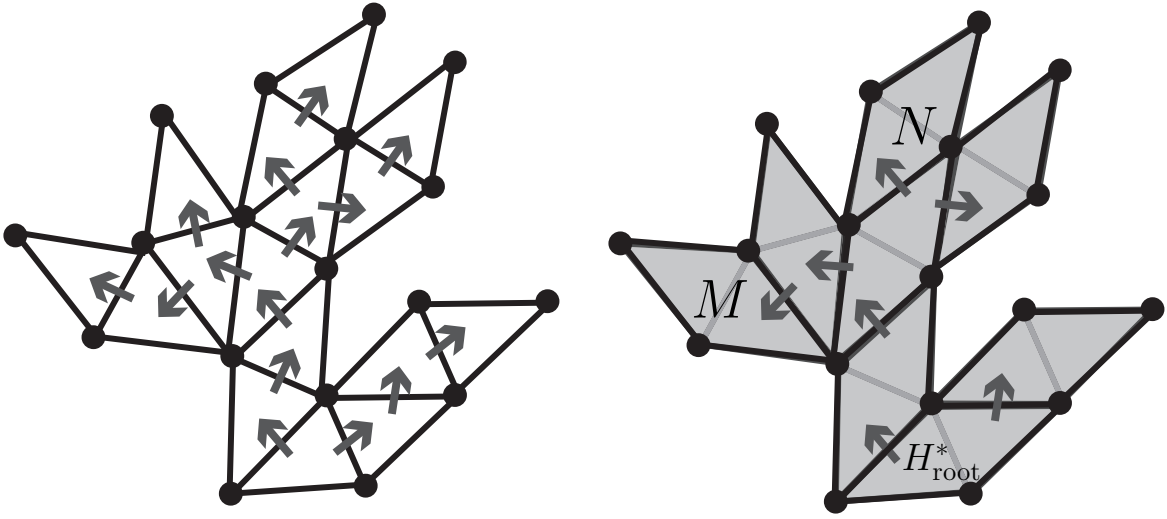}\\
      \rput(4.38,5.73){\huge $N$}
      \rput(1.8,3.43){\huge $M$}
      \rput(4.75,1.26){\Large $H^*_{\text{root}}$}
   \caption{A portion of $T^*$ and the corresponding covering by $H^*$. The orientation of the
   edges is omitted to simplify the drawing. The nodes $M$ and $N$ are QR-encoded by
   $S_M=(v_0v_2,v_2v_3,v_0v_2,v_0v_2)$, $\QR(M)=((0,1,0,0),(0,0,0),3)$ and  $S_N=(v_0v_2,v_2v_3,v_2v_3)$, $\QR(N)=((0,2),(0),1)$ respectively.}
   \label{fig: T star}
   \end{figure}

{\bf Encoding the rhombi.} In order to embed $T^*$ into $\C$,
rhombus-by-rhombus, a way to refer to every node $N\in H^*$ is called for. We
encode $N$ by the sequence of labels on the path from $H^*_{\text{root}}$ to
$N$. This trinary sequence is denoted by $S_N$. The map $N\to S_N$ is a bijection.

One may think of each label in $S_N$ as a direction, "left", "right" or
"forward", in which one must descend $H^*$, until finally arriving at $N$. To
simplify our proofs, we further encode $S_N$, by describing this sequence by
"how many forward steps to take between each turn left or right" and "is the
$i$-th turn left or right".

Formally, we do this by further encoding $S_N$ using a triple
$(\{q_i(N)\}_{i=1}^{m(N)+1},\{\rho_i(N)\}_{i=1}^{m(N)},m(N))$. We set
$q_i(N)$ to be the number of $v_2v_3$-s between the $(i-1)$-th non-$v_2v_3$
label in $S_N$ and the $i$-th one (for $i=1$ and for $i=m(N)+1$, the number
of $v_2v_3$-s before the first non-$v_2v_3$ label in $S_N$ and after the last
non-$v_2v_3$ label in $S_N$, respectively). We set $\rho_i(N)$ to be $0$ if
the $i$-th non-$v_2v_3$ element is $v_0v_2$ and $1$ if it is $v_1v_3$. We
call the triple $(\{q_i(N)\},\{\rho_i(N)\},m(N))$ the \emph{QR-encoding} of
$N$ denoting it by $\QR(N)$.

In accordance with our informal introduction, a QR-encoding
$(\{q_i\},\{\rho_i\},m)$, should be interpreted as taking $q_1$ steps
forward, then turning left or right according to $\rho_1$ being $0$ or $1$
respectively, then taking another $q_2$ steps forward in the new direction
and so on and so forth. The QR-encoding of each node is unique.

{\bf Encoding the vertices of $T^*$.} The encoding of the nodes of $H^*$
naturally extends to an encoding of the vertices of $T^*$ by defining
$\QR(u)=\{\QR(N)\ :\ \pi(N,v_0)=u\}$ for $u\in T^*$. This is indeed an
encoding of all the vertices of $T^*$, as for every vertex $u\in T^*$ there
exists at least one node $N$ such that $\pi(N,v_0)=u$. However, it is not
unique, as an infinite number of nodes encode each vertex.
As a unique encoding of every vertex is desirable
 for our purpose, we make the following observation.

\begin{obs}\label{obs: properties of rho-q}
Let $u\in T^*$, there exists a unique node $N$ such that
$\QR(N)=(\{q_i\},\{\rho_i\},m)\in\QR(u)$, satisfying $q_{m+1}=0$ and either
$q_{m}>0$ or $m=1$. We call such an encoding the proper encoding of $u$.
\end{obs}
\begin{proof}
It is not difficult to observe that the only proper encodings of
$\pi(H^*_{\text{root}},v_0)$ and $\pi(H^*_{\text{root}},v_1)$ are
$((0,0),(0),1)$ and $((0,0),(1),1)$ respectively.

For every vertex $u\in T^*$, except from $\pi(H^*_{\text{root}},v_0)$ and
$\pi(H^*_{\text{root}},v_1)$, there exists a unique node $N_u\in H^*$
satisfying that $\pi(N_u,v_i)=u$ for some $i\in\{2,3\}$. Let $\sim$ denote
the concatenation operation between sequences. Using this notation we have
that either $S(N_u)\sim v_2v_3\sim v_0v_2$ or $S(N_u)\sim v_2v_3\sim v_1v_3$
encode a node whose base vertex is mapped by $\pi$ to $u$. One may verify
from the definition of QR-encodings that $S_N$ ending with either $v_2v_3,
v_0v_2$ or with $v_2v_3\sim v_1v_3$ is equivalent to $q_{m+1}=0$ and
$q_{m}>0$.
\end{proof}

{\bf Polynomial embeddings.} A \emph{ $d$-polynomial embedding of a graph $G$
using $k$ edge-lengths} is a \underline{one-to-one} mapping $\psi:V_G\to
\C[x_1,\dots,x_d]$ where $\C[x_1,\dots,x_d]$ is the space of complex
polynomials in $d$ variables, such that for every fixed
$x\in\T^d=\{(x_1,\dots,x_d)\in\C^d:\forall i\in\{1,\dots,d\}, |x_i|=1\}$ the
map $v\mapsto \psi(v)(x)=\psi_x(v)$ is a linear embedding using only $k$
non-zero edge-lengths.

The importance of $d$-polynomial embeddings to our purpose stems from the
following proposition:
\begin{propos}\label{prop: almost every polynomial pre-drawing is degenerate}
If $\psi$ is a $d$-polynomial embedding of a graph $G$ with $k$ edge-lengths,
then for almost every $x=(x_1,\dots,x_d)\in \T^d$, $\psi_x$ is a degenerate
drawing of $G$ with $k$ edge-lengths.
\end{propos}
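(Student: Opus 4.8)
The plan is to show that the only obstruction to $\psi_x$ being a degenerate drawing is a collision of two vertex images, and that such collisions happen only for $x$ in a null subset of $\T^d$. For each unordered pair of distinct vertices $u,v\in V_G$ put $f_{uv}:=\psi(u)-\psi(v)\in\C[x_1,\dots,x_d]$; since $\psi$ is one-to-one, $f_{uv}$ is a nonzero polynomial. For $x\in\T^d$ one has $\psi_x(u)=\psi_x(v)$ if and only if $f_{uv}(x)=0$, so $\psi_x$ is injective on $V_G$ precisely when $x\notin B:=\bigcup_{\{u,v\}}Z(f_{uv})$, where $Z(p):=\{x\in\T^d:p(x)=0\}$. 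As $G$ is countable, $B$ is a countable union, so it suffices to prove that $Z(p)$ has measure zero in $\T^d$ --- with respect to the normalized Haar measure, i.e.\ the product of the uniform probability measures on the $d$ circles --- for every nonzero $p\in\C[x_1,\dots,x_d]$. Granting this, $B$ is null; and for each $x\in\T^d\setminus B$ the map $\psi_x$ is a linear embedding with pairwise distinct vertex images which, by hypothesis, uses only $k$ non-zero edge-lengths, hence is a degenerate drawing of $G$ with $k$ edge-lengths.

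The core of the proof is thus the measure-zero lemma, which I would establish by induction on $d$. For $d=1$, a nonzero univariate polynomial has finitely many roots, so $Z(p)$ is a finite subset of the circle $\T^1$ and hence null. For $d\ge 2$, write $p=\sum_{j=0}^{n}c_j(x_1,\dots,x_{d-1})\,x_d^{\,j}$, where $n$ is the degree of $p$ in $x_d$ and the leading coefficient $c_n$ is not identically zero (if $p$ does not involve $x_d$ at all, apply the inductive hypothesis in the remaining $d-1$ variables). By induction $Z(c_n)\subseteq\T^{d-1}$ is null. For every $x'\in\T^{d-1}\setminus Z(c_n)$ the univariate polynomial $t\mapsto p(x',t)$ is nonzero and so has only finitely many zeros on $\T^1$; since $Z(p)$ is closed, hence measurable, Fubini's theorem on $\T^d=\T^{d-1}\times\T^1$ identifies the measure of $Z(p)$ with the integral over $x'\in\T^{d-1}$ of the one-dimensional measure of $\{t\in\T^1:p(x',t)=0\}$, and this inner measure vanishes off the null set $Z(c_n)$. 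Hence $Z(p)$ is null.

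I expect the only delicate point to be this lemma, and specifically the fact that we need the zero set to be negligible \emph{inside} $\T^d$: the familiar statement that the zero locus of a nonzero polynomial is a proper subvariety of $\C^d$ of Lebesgue measure zero is useless here, because $\T^d$ is itself a null subset of $\C^d$. The induction--Fubini argument above handles this directly; alternatively, one could note that in angular coordinates $p|_{\T^d}$ is a trigonometric polynomial, hence real-analytic on the connected analytic manifold $\T^d$, and is not identically zero --- a polynomial vanishing on the infinite set $\T^1$ in each variable separately must have all coefficients zero --- so its zero set has measure zero. Either way, combined with the countable union over vertex pairs and the hypothesis bounding the number of non-zero edge-lengths, this proves the proposition.
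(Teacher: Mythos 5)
Your argument is correct and follows the same route as the paper: for each pair of distinct vertices the difference of their image polynomials is nonzero, its zero set in $\T^d$ is null, and a countable union over pairs gives injectivity of $\psi_x$ for almost every $x$, which together with the defining property of a $d$-polynomial embedding yields a degenerate drawing with $k$ edge-lengths. The only difference is that the paper simply asserts the measure-zero fact, while you supply a proof of it (induction on $d$ with Fubini, or real-analyticity of the restriction to $\T^d$), which is a valid filling-in of that step.
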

\begin{proof}
For any $v, w\in V_G$, the polynomials $\psi(v)(x)$ and $\psi(w)(x)$ may
coincide only on a set of measure $0$ in $T^d$. Taking union over all the
pairs $v_1,v_2$, we get that outside an exceptional set of measure zero in
$\T^d$, the map $\psi_x$ is one-to-one.
\end{proof}

\section{Three Distances Suffice for Degenerate Drawings}\label{sec: Two dist}

In this section we prove Proposition~\ref{prop: 3 degenerate} and thus
Theorem~\ref{thm: 3 degenerate}. To do so, for $x_0,x_1\in\T$, we introduce
in section~\ref{subs: psi} a $2$-polynomial embedding
$\psi=\psi(x_0,x_1)=\psi_{(x_0,x_1)}:T^*\to\C$. In section~\ref{subs: psi
img} we then write an explicit formula for the image of every vertex $v$
under $\psi$. This we do using the QR-encoding introduced in the
preliminaries section. In section~\ref{subs: psi poly embd} we prove that
$\psi$ is one-to-one. Finally, in section~\ref{subs: wrap up degenerate} we
conclude the proof of Proposition~\ref{prop: 3 degenerate}.

\subsection{The definition of $\psi$}\label{subs: psi} In this section we
define $\psi$. An outline of our construction is as follows: we start by
presenting $\psi_H(x)$, a $1$-polynomial embedding of $H$ which embeds the
rhombus graph onto a rhombus of side length $1$ with angle $x$ (identifying
the complex number $x$ with its angle on the unit circle). We then use a
boolean function $\ty$ on the nodes of $T^*$ to decide whether each rhombus
is mapped to a translated and rotated copy of $H(x_0)$ or of $H(x_1)$.
Finally, we define $\psi$ in the only way that respects both the covering
$\pi$ and the function $\ty$. The image of several subsets of $T^*$ through
$\psi(x_0,x_1)$ is depicted in figure~\ref{fig: psi T-star}.

We set $\psi_H(x)(v_0)=0$, $\psi_H(x)(v_1)=1$, $\psi_H(x)(v_2)=x$ and
$\psi_H(x)(v_3)=x+1$. This is indeed a polynomial drawing, mapping the
rhombus graph to a rhombus of edge length $1$, whose $v_1v_0v_2$ angle is
$x$. Figure~\ref{fig: phi} illustrates the image of $H$ under $\psi_H$.

  \begin{figure}[htb!]
   \centering%
   \includegraphics[scale=3]{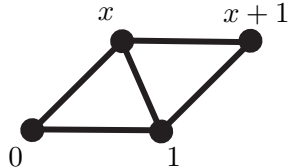}\\
      \rput(-1.7,0.4){$0$}
      \rput(0.4,0.4){$1$}
      \rput(-0.5,2.3){$x$}
      \rput(1.5,2.3){$x+1$}
   \caption{The image of $H$ under $\psi_H$. Observe how $x$ determines the $v_1v_0v_2$ angle of the rhombus.}
   \label{fig: phi}
   \end{figure}

We define an auxiliary function $\ty$. Let $MN$ be an arc of $H^*$. We set
\begin{equation}\label{eq: ty}
\ty(N)=\begin{cases} \ty(M) & L(MN)= v_2v_3\\
          \ty(N)\oplus q_{m(M)+1}(M)\pmod2 & L(MN) = v_0v_2\\
          \ty(N)\oplus q_{m(M)+1}(M)\oplus1\pmod2 & L(MN) = v_1v_3\\          \end{cases},
\end{equation}
where $\oplus$ represents addition modulo $2$. We set
$\ty(H^*_{\text{root}})=0$.

Set $\psi(\pi(H^*_{\text{root}}))=\psi_H(x_0)(H)$. Let $M,N\in H^*$ be
a pair of nodes such that $MN$ is an arc of $H^*$, and assume that $\psi$ is
already defined on the vertices of $\pi(M)$. By $\pi$'s definition, this
implies that $\psi(\pi(N,v_0))$ and $\psi(\pi(N,v_1))$ are already defined. We
then define $\psi(\pi(N,v_2)),\psi(\pi(N,v_3))$ so that
$\psi(\pi(N,v_0)),\psi(\pi(N,v_1)),\psi(\pi(N,v_2)),\psi(\pi(N,v_3))$ form a
translated and rotated copy of $H(x_{\ty(N)})$.
  \begin{figure}[htb!]
   \centering%
   \includegraphics[scale=3]{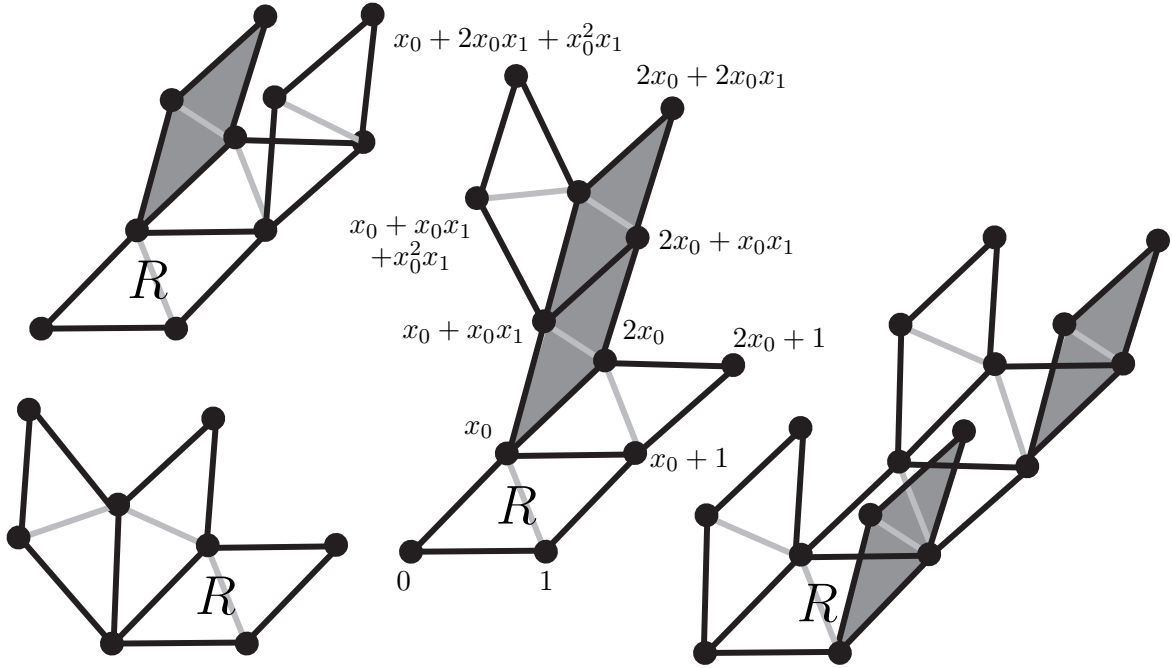}\\
      \rput(-2.5,1.7){$0$}
      \rput(-0.6,1.7){$1$}
      \rput(-1.5,3.7){$x_0$}
      \rput(1.3,3.3){$x_0+1$}
      \rput(2.5,4.9){$2x_0+1$}
      \rput(0.7,5){$2x_0$}
      \rput(1.8,6.2){$2x_0+x_0x_1$}
      \rput(-1.7,5){$x_0+x_0x_1$}
      \rput(-2.4,6.4){$x_0+x_0x_1$}
      \rput(-2.4,6){$+x_0^2x_1$}
      \rput(-1.1,8.9){$x_0+2x_0x_1+x_0^2x_1$}
      \rput(1.6,8.4){$2x_0+2x_0x_1$}
      \rput(3,1.4){\huge $R$}
      \rput(-5,1.5){\huge $R$}
      \rput(-1,2.7){\huge $R$}
      \rput(-5.9,5.7){\huge $R$}
   \caption{The image of several subgraphs of $T^*$ under $\psi$. Explicit values are given for several vertices.
   In each graph, the image of $\pi(H^*_{\text{root}})$ under $\psi$ is marked by $R$. Rhombi of angle $x_1$ are dark.}
   \label{fig: psi T-star}
   \end{figure}

As the image of every edge in $T^*$ is isometric to some edge of either
$H(x_0)$ or $H(x_1)$, we get
\begin{obs}\label{obs: psi three edge lengths}
Every edge of $T^*$ is mapped through $\psi$ to an interval of length $1$, $|x_0-1|$,
or $|x_1-1|$.
\end{obs}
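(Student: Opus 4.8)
The plan is to reduce the claim to a single finite computation inside one rhombus. First I would observe, using the definition of the covering map $\pi$ and the stated structure of $T^*$ (all of whose bounded faces are triangles, with face-adjacency a complete infinite binary tree), that every bounded face of $T^*$ arises as one of the two triangles of $\pi(N)$ for some node $N\in H^*$. Since $T^*$ is a triangulation, every edge of $T^*$ borders a bounded face, hence lies on such a triangle; therefore every edge of $T^*$ equals $\pi(N,v_iv_j)$ for some node $N$ and some edge $v_iv_j\in E_H$. In other words, $\pi$, extended to edges, is onto the edge set of $T^*$.

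Next I would invoke the recursive definition of $\psi$: for each node $N$ the four points $\psi(\pi(N,v_0)),\psi(\pi(N,v_1)),\psi(\pi(N,v_2)),\psi(\pi(N,v_3))$ form a translated and rotated copy of the configuration $\psi_H(x_{\ty(N)})(H)$. Thus the restriction of $\psi$ to the four vertices $\{\pi(N,v_i)\}_{i=0}^3$ is an isometry of $\C$ post-composed with $\psi_H(x_{\ty(N)})$, so the Euclidean length of the image of the edge $\pi(N,v_iv_j)$ equals the length of the corresponding edge of $\psi_H(x_{\ty(N)})(H)$.

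Then I would simply read off the five edge-lengths of $\psi_H(x)(H)$ from $\psi_H(x)(v_0)=0$, $\psi_H(x)(v_1)=1$, $\psi_H(x)(v_2)=x$, $\psi_H(x)(v_3)=x+1$: the edges $v_0v_1$, $v_0v_2$, $v_2v_3$, $v_1v_3$ have lengths $|1-0|=1$, $|x-0|=|x|$, $|(x+1)-x|=1$, $|(x+1)-1|=|x|$ respectively, while the diagonal $v_2v_1$ has length $|x-1|$. Since $x=x_{\ty(N)}\in\T$ forces $|x|=1$, every edge of $\psi_H(x)(H)$ has length $1$ or $|x-1|$. Combining this with the previous paragraph and using $\ty(N)\in\{0,1\}$, the image under $\psi$ of every edge of $T^*$ has length $1$, $|x_0-1|$ or $|x_1-1|$.

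The only genuinely delicate ingredient is the surjectivity of $\pi$ onto the edges of $T^*$ in the first paragraph; once this covering property (already set up in the preliminaries) is granted, the rest is the routine isometry-plus-arithmetic argument above. I would also note in passing that $\psi$ is well-defined on $T^*$ (the infinitely many nodes covering a given vertex yield the same image), but this is immaterial to the edge-length count: each edge is realized by at least one node $N$, and the length depends on $N$ only through $\ty(N)\in\{0,1\}$.
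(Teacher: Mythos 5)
Your proof is correct and follows essentially the same route as the paper, which dispatches this observation in one line: since $\pi$ covers every edge of $T^*$ by some node $N$ whose image under $\psi$ is a translated and rotated (hence isometric) copy of $\psi_H(x_{\ty(N)})(H)$, every edge-length is one of the lengths $1$, $|x_0-1|$, $|x_1-1|$, using $|x_0|=|x_1|=1$. Your write-up merely makes explicit the covering-of-edges step and the five-length computation that the paper leaves implicit.
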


While this definition of $\psi(x_0,x_1)$ is complete, an explicit formula for
every vertex in $T^*$ under $\psi(x_0,x_1)$ is required for proving that
$\psi$ is indeed a polynomial embedding. We devote the next section to
develop this formula.

\subsection{The image of $\psi$}\label{subs: psi img}
In this section we state a formula for $\psi\circ \pi$ of every base vertex.

 Let $u\in T^*$ and let $N\in H^*$, such
that $\QR(N)=(\{q_k\},\{\rho_k\},m)$ is the proper encoding of $u$. The first
$i$ elements of $\{q_k\},\{\rho_k\}$ encode a node in $T^*$ which is denoted
by $N_i$ (where $N_0=H^*_{\text{root}}$ which corresponds to the null
sequence). Naturally, $N_m=N$. From \eqref{eq: ty} we get
\begin{equation}\label{eq: nu-i}
\ty(N_i)=\ty(N_{i-1})\oplus q_{i}\oplus \rho_{i}\pmod2.
\end{equation}

Observe that in the embedding of every $H^*$ node through $\psi$, the edges
$v_0v_2$, $v_1v_3$ are parallel, as are the edges $v_0v_1, v_2v_3$. Next, we
define $P_i(x_0,x_1)$ to be a unit vector in the direction of the edges
$(v_0,v_1),(v_2,v_3)$ in $\psi(\pi(N_i))$ which, for $i>0$, is the same as
the direction of $(v_0,v_2),(v_1,v_3)$ in $\psi(\pi(N_{i-1}))$.

Formally
$$P^u_i(x_0,x_1)=P_i(x_0,x_1)=\psi(\pi(N_i,v_1))-\psi(\pi(N_i,v_0))=\psi(\pi(N_{i-1},v_2)-\psi(\pi(N_{i-1},v_0)),$$
where the last equality holds for $i>0$. Notice that $P_0(x_0,x_1)=1$.

With this in mind, it is possible to follow the change in $P_i$ between one
$N_i$ and the next. This yields:
\begin{equation}\label{eq: P formula}
P_{i}(x_0,x_1)=P_{i-1}(x_0,x_1)\cdot x_{\ty(N_{i-1})}.
\end{equation}

For $0\le i\le m$ write
$Q^u_i(x_0,x_1)=Q_i(x_0,x_1)=\psi_{x_0,x_1}(\pi(N_i,v_0))$. Observe that
$Q_0=\psi(\pi(H^*_\text{root},v_0))=0$. Let us describe how to get $Q_i$ from
$Q_{i-1}$ using $(\{q_k\},\{\rho_k\},m)$. By definition,
$$Q_i(x_0,x_1)-Q_{i-1}(x_0,x_1)=\psi(\pi(N_i,v_0))-\psi(\pi(N_{i-1},v_0)).$$
Thus $Q_i(x_0,x_1)-Q_{i-1}(x_0,x_1)$ can be calculated from the labels of the
edges along the path connecting $(N_{i-1},v_0)$ and $(N_i,v_0)$. Each edge
labeled $v_2v_3$ contributes to this difference $P_{i}$, and thus in total
such edge contribute $q_i\cdot P_{i}$. An edge with label $v_1v_3$
contributes $P_{i}/{x_{\ty(N_{i-1})}}=P_{i-1}$, while an edge labeled
$v_0v_2$ does not change the base vertex at all.

Applying this to the encoding, we get that
\begin{equation*}
Q_i-Q_{i-1} = q_{i}\cdot P_{i}+\rho_{i}\cdot P_{i-1}.
\end{equation*}

Summing this over $1 \le i\le m$, we get:
\begin{align*}
\psi_{x_0,x_1}(u)=Q_{m}&=\sum_{i=1}^m \left(  q_i P_i + \rho_{i}P_{i-1} \right) \\
&= \rho_1+\sum_{i=1}^{m-1} \left( q_i+\rho_{i+1}\right) P_i+q_{m}P_{m}
\end{align*}

Equivalently, letting $\rho_m=0$ and $q_0=0$ we have
\begin{equation}\label{eq: psi formula}
\psi_{x_0,x_1}(u)= \sum_{i=0}^m \left( q_i+\rho_{i+1}\right) P_i(x_0,x_1)= \sum_{i=0}^m  c_i P_i(x_0,x_1),
\end{equation}
where
\begin{equation}\label{eq: c-i}
c_i=q_i+\rho_{i+1}.
\end{equation}
Observe that for every $u\in T^*$, $\psi_{x_0,x_1}(u)$ is a polynomial in
$x_0$ and $x_1$ (because $P_i$ are monomials). Also observe that the total
degree of $P_i$, which we denote by $\degr P_i$, obeys $\degr P_i=\degr
P_{i-1}+1$. Therefore $\{c_i\}$ may be regarded as the coefficients of the
polynomial $\psi_{x_0,x_1}(u)$.

Note that in particular, using the above notations, Observation~\ref{obs:
properties of rho-q} and the fact that $(\{q_k\},\{\rho_k\},m)$ is proper yield
\begin{equation}\label{eq: c-m}
c_m=q_m>0.
\end{equation}

\subsection{Showing that $\psi$ is a
polynomial embedding}\label{subs: psi poly embd}

In this section we show that the image of the vertices of $T^*$ under $\psi$
are all distinct. Relation \eqref{eq: psi formula} and Observation~\ref{obs:
psi three edge lengths} imply that if this is the case, then $\psi$ is a
polynomial embedding of $T^*$ using three edge lengths.

The main proposition of this section is the following:
\begin{propos}\label{prop: psi is polynomial embedding}
Let $u,w\in T^*$ be two distinct vertices. Then $\psi_{x_0,x_1}(u)$ and
$\psi_{x_0,x_1}(w)$ are distinct polynomials.
\end{propos}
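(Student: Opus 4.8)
=== PROOF PROPOSAL ===

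The plan is to exploit the fact, recorded in \eqref{eq: psi formula}, that $\psi_{x_0,x_1}(u)=\sum_{i=0}^{m(u)} c_i(u)\, P_i^u(x_0,x_1)$ is a polynomial in $x_0,x_1$ whose monomials are exactly the $P_i^u$. Since $\degr P_i^u = \degr P_{i-1}^u + 1$ with $\degr P_0^u = 0$, the monomials appearing have total degrees $0,1,\dots,m(u)$, one per degree, and the coefficient of the degree-$i$ monomial is $c_i(u)=q_i(u)+\rho_{i+1}(u)$. In particular $\deg \psi_{x_0,x_1}(u) = m(u)$ by \eqref{eq: c-m} (the top coefficient $c_m=q_m>0$ is nonzero). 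So the first reduction: if $m(u)\neq m(w)$ the two polynomials have different degrees and we are done. Hence we may assume $m:=m(u)=m(w)$, and since $u\neq w$ have proper (hence unique) encodings, the triples $(\{q_i\},\{\rho_i\})$ must differ in at least one coordinate. The goal becomes: two distinct proper QR-triples of the same length $m$ give distinct polynomials.

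The core of the argument is to track, degree by degree, what the monomial $P_i$ actually is and what its coefficient is, and show that a discrepancy in the encoding forces a discrepancy in the polynomial. From \eqref{eq: P formula}, $P_i = P_{i-1}\cdot x_{\ty(N_{i-1})}$, so $P_i$ is the monomial $\prod_{j<i} x_{\ty(N_j)}$, i.e.\ $x_0^{a_i} x_1^{b_i}$ where $a_i = \#\{j<i : \ty(N_j)=0\}$, $b_i = \#\{j<i:\ty(N_j)=1\}$, $a_i+b_i=i$. And by \eqref{eq: nu-i}, $\ty(N_i)$ is determined recursively from $\ty(N_{i-1})$ by $q_i\oplus\rho_i$. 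The plan is to compare $u$ and $w$ at the smallest index $i^*$ where their encodings first differ. For all $i\le i^*-1$ the partial encodings agree, hence the nodes $N_0,\dots,N_{i^*-1}$ (and their $\ty$-values) coincide, hence the monomials $P_0^u,\dots,P_{i^*}^u$ equal $P_0^w,\dots,P_{i^*}^w$ and the coefficients $c_0,\dots,c_{i^*-2}$ agree. Now I split into cases according to where the first difference occurs.

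Case analysis at $i^*$: (a) the $\rho$-values first differ, say $\rho_{i^*}(u)\neq\rho_{i^*}(w)$ while the $q$'s agree up to $i^*$ — then via \eqref{eq: nu-i} the value $\ty(N_{i^*})$ differs between $u$ and $w$, so the monomials $P_{i^*+1}^u\neq P_{i^*+1}^w$ (one is $x_0$ times $P_{i^*}$, the other $x_1$ times $P_{i^*}$); tracing forward, the whole ``tail'' of monomials $P_{i^*+1},\dots$ is shifted — more precisely the pair $(a_i,b_i)$ differs for every $i>i^*$. One then has to check the two polynomials can't nevertheless coincide; the cleanest route is to look at the \emph{top-degree} coefficient $c_m=q_m>0$ attached to the degree-$m$ monomial, which for $u$ is $x_0^{a_m^u}x_1^{b_m^u}$ and for $w$ is $x_0^{a_m^w}x_1^{b_m^w}$; if these monomials differ, done immediately; otherwise $a_m^u=a_m^w$, and one pushes the comparison down. (b) the $q$-values first differ, $q_{i^*}(u)\neq q_{i^*}(w)$: this changes both the coefficient $c_{i^*-1}=q_{i^*-1}+\rho_{i^*}$? no — $q_{i^*}$ enters $c_{i^*}=q_{i^*}+\rho_{i^*+1}$ and also flips $\ty(N_{i^*})$ through \eqref{eq: nu-i} (parity of $q_{i^*}$), so again the tail monomials may shift. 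The unified way to finish is: among all monomials $x_0^a x_1^b$, order them by total degree and break ties somehow (say by the exponent of $x_1$); read off the coefficient sequence of $\psi(u)$ and of $\psi(w)$ as a function on $\Z_{\ge0}^2$; and argue that the map (proper QR-triple of length $m$) $\mapsto$ (coefficient function) is injective by reconstructing the triple from the coefficient function — i.e.\ prove the encoding is \emph{recoverable} from the polynomial. Reconstruction: $m$ is the degree; the monomials present tell you the sequence $(a_i,b_i)$, hence the sequence $\ty(N_0),\ty(N_1),\dots$, hence by \eqref{eq: nu-i} the sequence $q_i\oplus\rho_i$; the coefficient $c_i=q_i+\rho_{i+1}$ on the $i$-th monomial then lets you solve for the $q_i$ and $\rho_i$ individually, using $\rho_i\in\{0,1\}$, $q_i\ge 0$, and the properness constraint $q_{m+1}=0$, $c_m=q_m>0$.

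I expect the main obstacle to be exactly this last inversion step: showing that from the multiset of monomials together with their coefficients $c_i=q_i+\rho_{i+1}$ one can uniquely disentangle the $q_i$'s and $\rho_i$'s. The subtlety is that $c_i$ mixes a $q$ and a $\rho$, and the $\rho_i$ also influences the monomial \emph{support} through $\ty$ (via \eqref{eq: nu-i}), so the support and the coefficients are not independent data. The argument has to proceed by induction on $i$ from $i=0$ upward: knowing $\ty(N_0)=0$ and the support gives $\ty(N_i)$ for all $i$, hence $q_i\oplus\rho_i \pmod 2$ for all $i$; then $c_{i-1}=q_{i-1}+\rho_i$ together with the already-determined $q_{i-1}\oplus\rho_i$ parity and the bound $\rho_i\in\{0,1\}$ pins down $\rho_i$ and $q_{i-1}$ uniquely (and the properness of the encoding handles the boundary indices $i=1$ and $i=m,m+1$ cleanly, using \eqref{eq: c-m}). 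Once this recoverability is established, distinct vertices have distinct encodings, hence distinct coefficient data, hence distinct polynomials, which is the proposition. I would present the recovery as the key lemma and the degree/support bookkeeping as routine consequences of \eqref{eq: psi formula}, \eqref{eq: P formula} and \eqref{eq: nu-i}.
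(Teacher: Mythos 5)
There is a genuine gap, and it sits exactly at the step you yourself flag as the crux. Your reconstruction argument needs the polynomial to reveal the full monomial sequence $P_0,P_1,\dots,P_m$ (``the support gives $\ty(N_i)$ for all $i$''), but the polynomial only exhibits those $P_i$ whose coefficient $c_i=q_i+\rho_{i+1}$ is nonzero, and $c_i=0$ does occur for internal indices of proper encodings (take $q_i=0$ and $\rho_{i+1}=0$; e.g.\ the proper encoding $((1,0,1,0),(0,0,0),3)$ has $c_0=c_2=0$, so the polynomial is $P_1+P_3$ and the degree-$0$ and degree-$2$ monomials are invisible). At such an index you cannot read off $\ty(N_{i-1})$ from the support, so the induction ``support $\Rightarrow$ parities $q_i\oplus\rho_i$ $\Rightarrow$ disentangle $q_i,\rho_i$ from $c_i$'' does not get off the ground; worse, the vanishing of coefficients is correlated with the encoding, which is precisely why the statement is nontrivial. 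Your case analysis (a)/(b) at the first discrepancy runs into the same wall and is left unresolved (``one pushes the comparison down'', ``the tail monomials may shift''). The paper's proof resolves exactly this point: comparing degree by degree, once the two monomial tracks diverge at the first differing index $j$ (which happens because $\nu^u_j=\nu^w_j\oplus1$ by \eqref{eq: nu-i} and \eqref{eq: P formula}), equality of the polynomials forces $c^u_{j+k}=c^w_{j+k}=0$ for all $k\ge 1$, since the divergence of $P^u_{j+k}/P^w_{j+k}$ from $1$ propagates by induction; this cascade contradicts $c_m=q_m>0$ from \eqref{eq: c-m}. That cascade is the missing idea in your write-up: rather than recovering the encoding from the support, one shows that a discrepancy makes a nonzero top coefficient impossible.

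Two smaller points, both fixable but worth noting. First, your disentangling step quotes the parity $q_{i-1}\oplus\rho_i$, but \eqref{eq: nu-i} only gives parities with matched indices, $q_i\oplus\rho_i$; the correct bookkeeping is sequential ($c_0=\rho_1$ gives $\rho_1$, then $q_1\oplus\rho_1$ gives $q_1\bmod 2$, then $c_1=q_1+\rho_2$ pins down $\rho_2$ and $q_1$, and so on), and it works only under the unwarranted assumption that all parities are visible. Second, the reduction ``$\deg\psi(u)=m(u)$'' via \eqref{eq: c-m} fails for the two vertices $\pi(H^*_{\text{root}},v_0)$ and $\pi(H^*_{\text{root}},v_1)$, whose proper encodings have $m=1$ and $q_m=0$; these must be treated separately (as the paper does, noting they are the only vertices with constant image, namely $0$ and $1$).
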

\begin{proof}
Let $(\{q^u_i\},\{\rho^u_i\},m)$,$(\{q^w_i\},\{\rho^w_i\},n)$ be the proper
QR-encoding sequences for $u,w$ respectfully, and let $N_k^u$ and $N_k^w$ be
the nodes encoded by the first $k$ elements of those sequences respectively.
We write $\nu^u_i=\ty(N_{i}^u)$, $\nu^w_i=\ty(N_{i}^w)$ for all $i$.

Notice that by Observation~\ref{obs: properties of rho-q} the two sequences
are distinct. The fact that $\pi(H^*_{\text{root}},v_0)$ and
$\pi(H^*_{\text{root}},v_1)$ have unique images under $\psi$ is
straightforward, as these are the only vertices whose image is a polynomial
of total degree $0$. We can therefore assume $m>1$.

Assume for the sake of obtaining a contradiction that
$\psi_{x_0,x_1}(u)\equiv\psi_{x_0,x_1}(w)$ as functions of $(x_0,x_1)$, and
thus in particular $c^u_i=c^w_i$ for all $i$.

Combining this with \eqref{eq: c-m} and Observation~\ref{obs: properties of
rho-q}, we get $m=n$.

Let $j$ be the first index to satisfy $(q^u_j,\rho^u_j) \neq
(q^w_j,\rho^w_j)$. By \eqref{eq: nu-i} and \eqref{eq: P formula} this implies
\begin{equation}\label{eq: P-j and nu-j}
\forall i\le j : P^u_{i}=P^w_{i} \text{ and } \nu^u_{i-1}=\nu^w_{i-1}.
\end{equation}
Moreover, since $q^u_{j-1}=q^w_{j-1}$ and $c^u_{j-1}=c^w_{j-1}$ we get by
\eqref{eq: c-i} that $\rho^u_{j}=\rho^w_{j}$. We deduce that $q^u_{j}\neq
q^w_{j}$. Since $c^u_{j}=c^w_{j}$ we have
$q^u_{j}-q^w_{j}=\rho^w_{j+1}-\rho^u_{j+1}\in\{-1,0,1\}$. As we have assumed
this difference to be non-zero, we may assume without loss of generality
\begin{align}\label{eq: dif q-j}
q^u_{j}-q^w_{j}=\rho^w_{j+1}-\rho^u_{j+1}=1.
\end{align}

Applying \eqref{eq: P-j and nu-j} for $i=j+1$ and the last relation to \eqref{eq: nu-i}, we get
$$\nu^u_{j}=\nu^u_{j-1}\oplus q^u_j\oplus\rho^u_j=\nu_{j-1}^w\oplus q^w_{j}\oplus\rho^w_j\oplus 1=\nu_{j}^w\oplus 1.$$

By \eqref{eq: P formula} we get
\begin{equation*}
\frac{P^u_{j+1}}{P^w_{j+1}}=\frac{x_{\nu^u_j}}{x_{\nu^w_{j}}}\neq 1,
\end{equation*}
which implies $c^u_{j+1}=c^w_{j+1}=0$. This in turn implies that $q^u_{j+1}=q^w_{j+1}=0$ and $\rho^u_{j+2}=\rho^w_{j+2}=0$. Using now relation \eqref{eq: nu-i} for $i=j+2$ and recalling \eqref{eq: dif q-j}, we get
$$ \nu^u_{j+2}=\nu^u_{j+1}\oplus 0\oplus\rho^u_{j+1}=
(\nu^w_{j+1}\oplus 1)\oplus 0\oplus(\rho^w_{j+1}-1)=\nu^w_{j+2}.$$
Again by \eqref{eq: P formula} we have
\begin{equation*}
\frac{P^u_{j+2}}{P^w_{j+2}}=\frac{P^u_{j+1}}{P^w_{j+1}}\cdot \frac{x_{\nu^u_{j+2}}}{x_{\nu^w_{j+2}}}=\frac{P^u_{j+1}}{P^w_{j+1}}\neq 1,
\end{equation*}
which implies $c^u_{j+2}=c^w_{j+2}=0$. Continuing by induction, we conclude that $c^u_{j+k}=c^w_{j+k}=0$ for all $k>1$. Thus $j=m$, and so by \eqref{eq: c-m},
$q^u_{j}=c^u_{j}=c^w_{j}=q^w_{j}$, a contradiction to \eqref{eq: dif q-j}.
\end{proof}
\subsection{Three Distances Suffice for Degenerate Drawings}\label{subs: wrap up degenerate}

We are now ready to present
the proof of Proposition~\ref{prop: 3 degenerate},
and thus conclude the proof of Theorem~\ref{thm: 3 degenerate}.
\begin{proof}[Proof of Proposition~\ref{prop: 3 degenerate}]
By Proposition~\ref{prop: psi is polynomial embedding}, $\psi$ is a
$2$-polynomial embedding of every finite subgraph $G\subseteq T^*$, using $3$
edge-lengths. By Proposition~\ref{prop: almost every polynomial pre-drawing
is degenerate} and Observation~\ref{obs: psi three edge lengths}, the set
$$\{(x_0,x_1)\ : \ \text{s.t. } x_0,x_1\in\T^2\text{ and } \psi(x_0,x_1) \text{ is a degenerate drawing}\}$$
is of full measure, and each of these degenerate drawings uses only side
lengths $1$, $|x_0-1|$ and $|x_1-1|$. Let $a\in(0,1)$, the embedding
$a\cdot\psi$, i.e. the composition of a multiplication by $a$ on $\psi$, is
thus a degenerate drawing of $G$ for almost every $x_0, x_1$ using the side
lengths $a, a|1-x_0|,a|1-x_1|$. The desired result follows.
\end{proof}

\subsection{Open problems}
Several interesting problems concerning graphs with a low
(degenerate) distance
number remain open. In this short section we state those of greater interest
to us. The first and most natural one is:

\begin{problem}
Do outerplanar graphs have a uniformly bounded distance number?
\end{problem}

While we believe we may be able to answer this problem positively, our
construction is rather complicated and is thus postponed to a future paper.
It will be interesting to see a simple construction which can be easily
described.

The general problem which, in our opinion,
extends this work most naturally is:
\begin{problem}
Which families of graphs have a uniformly bounded (degenerate) distance
number?
\end{problem}
Observe that the family of planar graphs does not have this
property, as the
complete bipartite graph $K_{2,n}$ is an example of a planar graph whose
degenerate distance number is $\Theta(\sqrt{n})$.

Finally, our result implies that the maximum possible
degenerate distance number of an
outerplanar graph is at most three. It is easy to see that there are
outerplanar graphs whose degenerate distance number is two. Are there any
outerplanar graphs whose degenerate distance number is indeed three?

\begin{problem}
Is it true that the maximum possible degenerate distance number of an
outerplanar graph is two?
\end{problem}

\end{document}